\setlist{label=(\arabic*)}
\defcite\mel{mellit_poincare}
\defcite\melc{mellit_integrality}
\defcite\sch{schiffmann_indecomposable}
\defcite\mac{macdonald_symmetric}
\defcite\gar{garsia_remarkable}
\defcite\msco{mozgovoy_counting} 
\defcite\msc{mozgovoy_countinga}
\def\dt{\hat\Omega}
\def\idt{\Omega}
\def\pdt{\dt^+}
\def\pidt{\idt^+}
\def\cdt{\dt^\circ}
\def\cidt{\idt^\circ}
\def\wr{\alpha} 
\def\vr{\bm\alpha} 
\def\bq{\bm q} 
\def\mm{\widetilde H}
\dmat\lrg{\Ring_\la} 
\def\vl#1{[#1]} 
\def\brk{\rbr}
\def\note#1{}
\begin{document}
\title{Counting twisted Higgs bundles}
\author{Sergey Mozgovoy}
\author{Ronan O'Gorman}

\email{mozgovoy@maths.tcd.ie}
\address{School of Mathematics, Trinity College Dublin, Ireland
\newline\indent
Hamilton Mathematics Institute, Ireland}

\begin{abstract}
We count invariants of the moduli spaces of twisted Higgs bundles on a smooth projective curve.
\end{abstract}

\maketitle
\section{Introduction}
Let $X$ be a smooth projective curve of genus $g$ defined over a finite field $\bF_q$.
Let $L$ be a line bundle of degree $\ell$ over $X$ and let $\hsp_L(r,d)$ be the moduli space of semistable $L$-twisted Higgs bundles over $X$.
It parametrizes pairs $(E,\vi)$, where $E$ is a vector bundle of rank $r$ and degree~$d$ over $X$ and $\vi:E\to E\ts L$ is a homomorphism.
A formula for the computation of the number of points of $\hsp_L(r,d)$ for coprime $r,d$ was conjectured in \cite{mozgovoy_solutions} 
and is proved in this note.

The above conjecture was obtained as a solution of a recursive formula, called an ADHM recursion, conjectured by Chuang, Diaconescu, and Pan \cite{chuang_motivic}.
The ADHM recursion was itself based on a conjectural wall-crossing formula for the refined Donaldson-Thomas invariants on a noncompact 3CY variety $Y=L\oplus(\om_X\ts L\inv)$, where $\om_X$ is the canonical bundle of $X$,
as well as a conjectural formula for the asymptotic ADHM invariants.
The latter invariants can be interpreted as Pandharipande-Thomas invariants of $Y$ \cite{pandharipande_curve}.
The formula counting them was derived in \cite{chuang_motivic} by string theoretic methods, hence remains conjectural from the mathematical point of view.

On the other hand, the formula for $\hsp_L(r,d)$ conjectured in \cite{mozgovoy_solutions} can be considered as a generalization of the conjecture by Hausel and Rodriguez-Villegas \cite{hausel_mixed} in the case of usual Higgs bundles, where the twisting line bundle $L$ is equal to $\om_X$.
A breakthrough for the counting of usual Higgs bundles was made by Schiffmann \cite{schiffmann_indecomposable} who proved an explicit, albeit rather complicated formula for these invariants, quite different from the conjecture of~\cite{hausel_mixed}.
An equivalence between these formulas was proved recently
by purely combinatorial methods
in a brilliant series of papers by Mellit \cite{mellit_integrality,mellit_poincare,mellit_poincarea}.

Results on the invariants of moduli spaces of Higgs bundles
for small rank and degree were obtained in \cite{hausel_mirrora,hausel_mixed,hitchin_self-duality,gothen_betti,garcia-prada_motives,rayan_co,chuang_motivic}.
The conjecture of Hausel and Rodriguez-Villegas was proved for the $y$-genus in \cite{garcia-prada_y}.
An alternative general formula for twisted Higgs bundles on $\bP^1$ -- in terms of quiver representations -- was obtained in~\cite{mozgovoy_higgs}.
Other interesting results related to counting of Higgs bundles can be found in 
\cite{chaudouard_sura,chaudouard_sur,
dobrovolska_moduli,fedorov_motivic}.

In this paper we will apply Mellit's methods in order to prove a formula for general $L$-twisted Higgs bundles.
This task will be rather straightforward as Schiffmann's computation was generalized earlier for twisted Higgs bundles in \msc.
More precisely, let $\hst_L(r,d)$ be the moduli stack of semistable $L$-twisted Higgs bundles over $X$.
Given a finite type algebraic stack $\cX$ over $\bF_q$, define its volume (see \S\ref{sec:volume} for more details on volumes)
\begin{equation}
[\cX]=(\#\cX(\bF_{q^n}))_{n\ge1},\qquad
\#\cX(\bF_{q^n})
=\sum_{x\in\cX(\bF_{q^n})\qt\sim}\frac1{\#\Aut(x)}.
\end{equation}
Define 
(integral) Donaldson-Thomas invariants $\idt_{r,d}$ using the plethystic logarithm (see \S\ref{sec:la-rings})
\begin{equation}
\sum_{d/r=\ta}\idt_{r,d}T^rz^d
=(q-1)\Log\rbr{
\sum_{d/r=\ta}(-q^\oh)^{-\ell r^2}[\hst_L(r,d)]T^rz^d
},\qquad\ta\in\bQ.
\end{equation}
Note that if $r,d$ are coprime, then every $E\in\hst_L(r,d)$ is stable and $\End(E)=\bF_q$ (see Remark~\ref{stable}). Therefore
\begin{equation}
\frac{\vl{\hsp_L(r,d)}}{q-1}
=[\hst_L(r,d)]=(-q^\oh)^{\ell r^2}\frac{\idt_{r,d}}{q-1},
\end{equation}
hence we can recover $\vl{\hsp_L(r,d)}$ from $\idt_{r,d}$.
\begin{notes}
If $r,d$ are coprime and $E$ is semistable of type $(r,d)$, then it is absolutely stable, hence its automorphism group is $\bF_q^*$.
Indeed, let $A=\End(E)$ and $L=k(E)=A/J(A)$ be its splitting field.
If $L\ne K=\bF_q$, then $E\ts_{K}L$ is a direct sum of stable objects of the same type, hence $(r,d)$ is divisible, a contradiction.
\end{notes}
Consider the zeta function of the curve $X$
$$Z_X(t)
=\exp\rbr{\sum_{n\ge1}\frac{\#X(\bF_{q^n})}nt^n}
=\frac{\prod_{i=1}^g(1-\al_it)(1-\al_i\inv qt)}{(1-t)(1-qt)},
$$
where $\al_i$ are the Weil numbers of $X$ (see \S\ref{sec:volume}).
The following result was conjectured in~\cite{mozgovoy_solutions}
(\cf \S\ref{sec:alt}).
We formulate it in the case $\deg L>2g-2$
(see Remark \ref{rem:canonical} for the case $L=\om_X$).


\begin{theorem}[\cf Theorem \ref{th:main proof}]
\label{th:main intro}
Assume that $p=\ell-(2g-2)>0$.
Given a partition $\la$ and a box $s\in\la$, let $a(s)$ and $l(s)$ denote its arm and leg lengths respectively (see \S{\rm\ref{prelim}}).
Define
\begin{equation}
\cdt(T,z)=
\sum_{\la}T^{\n\la}\prod_{s\in\la}
(-q^{a(s)}z^{l(s)})^p\frac
{\prod_{i=1}^g
(q^{a(s)}-\al_i\inv z^{l(s)+1})(q^{a(s)+1}-\al_i z^{l(s)})}
{(q^{a(s)}-z^{l(s)+1})(q^{a(s)+1}-z^{l(s)})},
\end{equation}
\begin{equation}
\sum_{r\ge1}\cidt_r(z)T^r=(q-1)(1-z)\Log\cdt(T,z).
\end{equation}
Then
$\cidt_r(z)\in\bZ[q,z,\al_1^{\pm1},\dots,\al_g^{\pm1}]$ and
$\idt_{r,d}=q^{pr/2}\cidt_r(1)$ for all $d\in\bZ$.
In particular, if $r,d$ are coprime, then
\begin{equation}
\vl{\hsp_L(r,d)}
=(-1)^{pr}q^{(g-1)r^2+p\binom{r+1}2}\cidt_r(1).
\end{equation}
\end{theorem}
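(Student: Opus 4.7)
The plan is to deduce the theorem from the explicit formula for $[\hst_L(r,d)]$ proved in \msc\ --- the twisted generalization of Schiffmann's formula \sch\ --- and then to apply Mellit's combinatorial machinery from \mel\ and \melc\ to recast it in the partition-indexed form appearing on the right-hand side. This is the strategy explicitly announced in the introduction.

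First, I would recall the Mozgovoy--Schiffmann expression for $[\hst_L(r,d)]$, which takes the form of a sum over combinatorial ``type'' data (compositions of the rank together with additional discrete labels) whose summand is built from the Weil numbers $\al_1,\dots,\al_g$ of $X$ and from Hall--Littlewood-type symmetric function manipulations. Assembling these volumes into the slope generating series $\sum_{d/r=\tau}(-q^\oh)^{-\ell r^2}[\hst_L(r,d)]T^r z^d$ and applying $(q-1)\Log$ produces the invariants $\idt_{r,d}$. A structural reduction visible already at this stage, from the tensor-with-line-bundle bijection $\hst_L(r,d)\iso\hst_L(r,d+re)$, is that the count depends on $d$ only through $d\bmod r$.

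The heart of the argument is the combinatorial step. In the untwisted case $p=0$, Mellit in \mel\ and \melc\ proves --- via shuffle-algebra identities and careful manipulation of plethystic operators --- that the Schiffmann-type sum equals the HRV-type partition sum whose summand factorizes across boxes $s\in\la$ in terms of arm and leg lengths. I would replicate this manipulation for the twisted sum, following Mellit step by step. The only new feature is that the twist contributes one extra local factor per Jordan cell in the Mozgovoy--Schiffmann formula; after Mellit's reorganization into a product over boxes this collapses into the uniform factor $(-q^{a(s)}z^{l(s)})^p$ appearing in $\cdt(T,z)$. Setting $p=0$ must recover Mellit's formula verbatim, which serves as a sanity check.

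Once the identification of $\cdt(T,z)$ with the slope-sector generating series is established (with the $(1-z)$ factor of the theorem absorbing the normalization between slopes), the remaining claims follow formally. Polynomiality of $\cidt_r(z)$ in the stated ring is an instance of the integrality property of the plethystic logarithm, mirroring the integrality theorem of \melc. The identity $\idt_{r,d}=q^{pr/2}\cidt_r(1)$ --- which in particular asserts full $d$-independence --- is then obtained by specializing $z=1$ and extracting the contribution of a single slope; and the formula for $\vl{\hsp_L(r,d)}$ in the coprime case follows from the chain
\begin{equation*}
\vl{\hsp_L(r,d)}=(-q^\oh)^{\ell r^2}\idt_{r,d}=(-q^\oh)^{\ell r^2}q^{pr/2}\cidt_r(1),
\end{equation*}
combined with $\ell r^2+pr=2(g-1)r^2+2p\binom{r+1}{2}$ (using $\ell=p+2g-2$) and the parity identity $pr^2\equiv pr\pmod2$. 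The principal obstacle is the combinatorial step: verifying that Mellit's delicate shuffle-algebra identities transport cleanly to the twisted setting, with the extra factors entering uniformly as $(-q^{a(s)}z^{l(s)})^p$ at each box. All other parts are then formal bookkeeping.
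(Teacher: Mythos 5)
There is a genuine gap, and it lies exactly where you place the ``heart of the argument.'' You assert that Mellit proves the Schiffmann-type sum \emph{equals} the HRV-type partition sum, and that the task is to transport this identity of generating series to the twisted case, after which ``the identification of $\cdt(T,z)$ with the slope-sector generating series'' makes everything else formal. No such identification exists, and this is not what Mellit proves. In the paper, the formula of \msc for the positive stack $\hst_L^+(r,d)$, after Mellit's simplification (Corollary \ref{cor:ms-mel}), is already a sum over partitions whose summand is the summand of $\cdt$ \emph{times an extra factor} $f_{\la'}$, where $f$ is the symmetrization \eqref{f def} evaluated at $z_i=q^{i-n}t^{\la_i}$. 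The series $\pdt(q^{-p/2}T,t)$ and $\cdt(T,t)$ are genuinely different; they only become comparable after applying $(q-1)(1-t)\Log$ and reducing modulo $(1-t)$. The key input is therefore not a shuffle-algebra identity of sums but Mellit's Lemma 5.1 (Theorem \ref{th:tech} here): for a deformation $f(u)$ of $q$-twisted symmetric functions with $f(0)=1$, the integral invariants of $\sum_\la c_\la\wtl H_\la f_{\la'}(u)$ differ from those at $u=0$ by elements of $(1-t)R$, \emph{provided} the $u=0$ series is admissible. That admissibility is itself a nontrivial input, supplied by the integrality theorem of \melc together with the fact that the operator $\wtl H_\la\mto(-1)^{\n\la}q^{n(\la')}t^{n(\la)}\wtl H_\la$ preserves admissibility. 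Your proposal contains no substitute for either ingredient, and the route you describe (prove the two sums are equal, then take $\Log$) would fail at the first step.

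A secondary omission: the closed formula from \msc counts \emph{positive} Higgs bundles, not semistable ones, so one must first invoke Theorem \ref{th:compar} ($\idt_{r,d}=\pidt_{r,d}$ for $d\gg0$, together with $\idt_{r,d+r}=\idt_{r,d}$) to reduce to that setting; the periodicity you mention is used again at the very end, where one writes $q^{-pr/2}\sum_d\pidt_{r,d}t^d=\cidt_r(q,t)/(1-t)+g$ and compares coefficients to extract $\cidt_r(q,1)$ --- this is the actual role of the $(1-t)$ factor, not an ``absorption of normalization between slopes.'' Your final bookkeeping for the coprime case (the sign $(-1)^{pr}$ and the exponent $(g-1)r^2+p\binom{r+1}{2}$) is correct.
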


\begin{notes}
Tests show that $\cidt_r(1,q)$ has degree $(g-1)r^2+p\binom r2+1$.
This means that the polynomial of the moduli space has degree $2(g-1)r^2+pr^2+1$.
For $L=\om_X$ one has $\idt_{r,d}=q^{pr/2+1}\cidt_r(1,q)$, hence the dimension of the moduli space is $2(g-1)r^2+2$.
\end{notes}

\medskip
\section{Preliminaries}
\label{prelim}

\subsection{Partitions}
A partition is a sequence of integers $\la=(\la_1\ge\la_2\ge\dots)$ such that $\la_n=0$ for $n\gg0$.
We define its length $l(\la)=\#\sets{i}{\la_i\ne0}$ and its weight $\n\la=\sum_{i}\la_i$.
Define its Young diagram (also denoted by $\la$)
\begin{equation}
d(\la)=\sets{(i,j)\in\bZ^2}{i\ge1,1\le j\le\la_i}.
\end{equation}
An element $s=(i,j)\in\la$ is called a box of the Young diagram located at the $i$-th row and $j$-th column.
Define the conjugate partition $\la'$ with $\la'_j$ equal the number of boxes in the $j$-th column of $\la$.
Given a box $s=(i,j)\in\la$, define its arm and leg
lengths respectively
\begin{equation}
\label{eq:arm-leg}
a(s)=\la_i-j,\qquad l(s)=\la'_j-i.
\end{equation}
Define the hook length $h(s)=a(s)+l(s)+1$.

\begin{figure}[h]
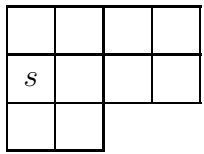

\begin{ytableau}
{}&&&\\
s&&&\\
&
\end{ytableau}
\caption{Young diagram for $\la=(4,4,2)$.
Here $\la'=(3,3,2,2)$, $s=(2,1)$, $a(s)=3$, $l(s)=1$, $h(s)=5$.}
\end{figure}

Define
\begin{equation}
n(\la)=\sum_{s\in\la}l(s)
=\sum_{i\ge1}\binom{\la'_i}2
=\sum_{i\ge1}(i-1)\la_i,
\end{equation}
\begin{equation}
\label{eq:form vs n}
\ang{\la,\la}=\sum_{i\ge1}(\la_i')^2=2n(\la)+\n\la.
\end{equation}


Define
\eq{\label{N}
N_\la(u,q,t)
=
\prod_{s\in\la}(q^{a(s)}-ut^{l(s)+1})(q^{a(s)+1}-u\inv t^{l(s)}).}
One can show that
\begin{equation}
\label{eq:N for conj}
N_{\la}(u,q,t)=N_{\la'}(u,t,q).
\end{equation}

\begin{notes}
\begin{multline*}
N_{\la'}(u,z,q)
=\prod_{s\in\la}(z^l-uq^{a+1})(z^{l+1}-u\inv q^a)\\
=\prod_{s\in\la}(q^{a+1}-u\inv z^l)(q^a-uz^{l+1})
=N_\la(u,q,z).
\end{multline*}
\end{notes}

\subsection{\tpdf{\la}{Lambda}-rings and symmetric functions}
\label{sec:la-rings}
For simplicity we will introduce only \la-rings without \bZ-torsion.
To make things even simpler we can assume that our rings are algebras over~\bQ.
The reason is that in this case the axioms of a \la-ring can be formulated in terms of Adams operations.

Define the graded ring of symmetric polynomials
$$\La_n=\bZ[x_1,\dots,x_n]^{S_n},$$
where $\deg x_i=1$.
Define the ring of symmetric functions
$\La=\ilim\La_n,$
where the limit is taken in the category of graded rings.
For any commutative ring $R$, define $\La_R=\La\ts_\bZ R$.
As in \mac, define generators of $\La$ (complete symmetric and elementary symmetric functions)
$$
h_n=\sum_{i_1\le\dots\le i_n}x_{i_1}\dots x_{i_n},\qquad
e_n=\sum_{i_1<\dots<i_n}x_{i_1}\dots x_{i_n},\qquad
$$
and generators of $\La_\bQ$ (power sums)
$$p_n=\sum_{i}x_i^n.$$
The elements $h_n,e_n,p_n$ have degree $n$.
We also define $h_0=e_0=p_0=1$ for convenience.
For any partition \la of length $\le n$,
define monomial symmetric polynomials
$m_\la=\sum x^\al\in\La_n$, where the sum runs over all distinct permutations $\al=(\al_1,\dots,\al_n)$ of $(\la_1,\dots,\la_n)$.
They induce monomial symmetric functions $m_\la\in\La$ which form a \bZ-basis of $\La$.

A \la-ring $R$ is a commutative ring equipped with a pairing, called plethysm,
$$\La\xx R\to R,\qquad (f,a)\mto f\circ a=f[a],$$
such that with $\psi_n=p_n[-]:R\to R$, called Adams operations, we have
\begin{enumerate}
\item The map $\La\to R,\, f\mto f[a]$, is a ring homomorphism, for all $a\in R$.
\item $\psi_1:R\to R$ is an identity map.
\item The map $\psi_n:R\to R$ is a ring homomorphism,
for all $n\ge1$.
\item $\psi_m\psi_n=\psi_{mn}$, for all $m,n\ge1$.
\end{enumerate}

\begin{remark}\br
\begin{enumerate}
\item
The first axiom implies that it is enough to specify just Adams operations $\psi_n$ or \si-operations $\si_n=h_n[-]$ or \la-operations $\la_n=e_n[-]$.
It also implies that $1[a]=1$, for all $a\in R$.

\item
Usually we equip algebras of the form 
$\bQ[x_1,\dots,x_k]$, $\bQ(x_1,\dots,x_k)$, $\bQ\pser{x_1,\dots,x_k}$
with a \la-ring structure by the formula
$$p_n[f(x_1,\dots,x_k)]=f(x_1^n,\dots,x_k^n).$$

\item
The ring \La can be itself equipped with a \la-ring structure using the same formula
$$
p_m[f]=f(x_1^m,x_2^m,\dots),\qquad f\in\La.
$$
In particular $p_m[p_n]=p_{mn}$.
\item
If $R$ is a \la-ring, then $f\circ(g\circ a)=(f\circ g)\circ a$ for all $f,g\in\La$ and $a\in R$.
\end{enumerate}
\end{remark}

The ring \La can be considered as a free \la-ring with one generator in the following sense.
Consider the category $\lrg$ of \la-rings (with morphisms that respect plethystic operations).
The forgetful functor $F:\lrg\to\Set$
has a left adjoint
$$\Sym:\Set\to\lrg.$$
Given a finite set $\set{X_1,\dots,X_n}$, we denote 
\Sym\set{X_1,\dots,X_n} by $\Sym[X_1,\dots,X_n]$.
Then, for a one-point set $\set{X}$, there is an isomorphism of \la-rings
$$\Sym[X]\iso\La,\qquad X\mto p_1.$$
We will usually identify \La and $\Sym[X]$ using this isomorphism.

Define a filtered \la-ring $R$ to be a \la-ring equipped with a filtration $R=F^0\sps F^1\sps\dots$ such that $F^iF^j\sbs F^{i+j}$ and $\psi_n(F^i)\sbs F^{ni}$.
It is called complete if the natural homomorphism $R\to\ilim R/F^i$ is an isomorphism.
For example, the ring \La is graded, where $\deg h_n=n$.
Hence we have a decomposition $\La=\bop_{k\ge0}\La^k$ 
into graded components.
We equip $\La$ with the filtration $F^k\La=\bop_{i\ge k}\La^i$ and define the completion 
\begin{equation}
\hat\La=\ilim\La/F^k\La\iso\bZ\pser{h_1,h_2,\dots}.
\end{equation}
This ring can be considered as a free complete \la-ring with one generator.
One can see that if $R$ is a complete \la-ring then the plethystic pairing extends to
$$\hat\La\xx F^1R\to R.$$
In particular, the element
\begin{equation}
\Exp[X]=\sum_{n\ge0}h_n[X]=\exp\rbr{\sum_{n\ge1}\frac{p_n[X]}n}
=\prod_{i\ge1}\frac1{1-x_i}\in\hat\La,
\end{equation}
called a plethystic exponential, induces a map $\Exp:F^1R\to 1+F^1R$ which satisfies
\begin{equation}
\Exp[a+b]=\Exp[a]\Exp[b].
\end{equation}
This map has an inverse, called a plethystic logarithm,
\begin{equation}
\Log:1+F^1R\to F^1R,\qquad
\Log[1+a]=\sum_{n\ge1}\frac{\mu(n)}{n}p_n[\log(1+a)].
\end{equation}

\subsection{Modified Macdonald polynomials}
For an introduction to modified Macdonald polynomials see \gar or \cite{mellit_poincarea}.
Let $\cP_n$ denote the set of partitions \la with $\n\la=n$.
Define the natural partial order on $\cP_n$ by
$$\la\le\mu\iff \sum_{i=1}^k\la_i\le\sum_{i=1}^k\mu_i
\quad \forall k\ge1.$$
One can show that $\la\le\mu\iff\mu'\le\la'$ \mac[1.1.11].
Let $\La^{\le\la}\sbs\La$ be the subspace spanned by monomial symmetric functions $m_\mu\in\La$ with $\mu\le\la$.

Let $F=\bQ(q,t)$ and $\La_F=\La\ts_\bZ F$.
For any symmetric function $f\in\La_F$, we will sometimes denote $f[X]$ by $f[X;q,t]$ to indicate dependence on $q,t$.
Let $P_\la[X;q,t]\in\La_F$ be Macdonald polynomials \mac[\S6].
Define modified Macdonald polynomials $\mm_\la[X;q,t]\in\La_F$ \gar[I.8--I.11]
\begin{equation}
\mm_\la[X;q,t]
=H_\la\sbr{X;q,t\inv}\cdot t^{n(\la)},\qquad
H_\la[X]=P_\la\sbr{\frac X{1-t}}
\cdot \prod_{s\in\la}(1-q^{a}t^{l+1}).
\end{equation}
Alternatively, one can uniquely determine $\mm_\la[X;q,t]\in\La_F$ by the properties
\begin{enumerate}
\item $\mm_\la[(1-t)X]\in\La^{\le\la}_F$.
\item Cauchy identity:
\begin{equation*}
\sum_\la
\frac{\mm_\la[X]\mm_\la[Y]}{\prod_{s\in\la}(q^a-t^{l+1})(q^{a+1}-t^l)}
=\Exp\sbr{\frac{XY}{(q-1)(1-t)}}.
\end{equation*}
\end{enumerate}
We have by \gar[Cor.~2.1] (see also \mac[6.6.17])
\begin{equation}
\mm_\la[1-u;q,t]=\prod_{s\in\la}(1-q^{a'(s)}t^{l'(s)}u),
\end{equation}
where $a'(s)=j-1$, $l'(s)=i-1$ for $s=(i,j)\in\la$.
This implies $\mm_\la[1;q,t]=1$.
The symmetric function $\mm_\la$ has degree $\n\la$, hence, applying it to $z\in F[z]$, we obtain
\begin{equation}
\mm_\la[z;q,t]=z^{\n\la}.
\end{equation}
Finally, we have by \gar[Cor.2.2]
\begin{equation}
\mm_\la[X;q,t]=\mm_{\la'}[X;t,q].
\end{equation}

\subsection{Volume ring}
\label{sec:volume}
Following \cite{mozgovoy_poincare},
we will introduce 
in this section a \la-ring which is an analogue of the Grothendieck ring of algebraic varieties or the ring of motives.
We define it to be the ring $\cV=\prod_{n\ge1}\bQ$ with Adams operations
\begin{equation}
\psi_m(a)=(a_{mn})_{n\ge1},\qquad a=(a_n)_{n\ge1}\in \cV,
\end{equation}
and call it the volume ring or the ring of counting sequences \cite{mozgovoy_poincare}. 

Given an algebraic variety $X$ over a finite field $\bF_q$, define its volume
\begin{equation}
[X]=(\# X(\bF_{q^n}))_{n\ge1}\in \cV.
\end{equation}
More generally, given a finite type algebraic stack $\cX$ over $\bF_{q}$, we define its volume
\begin{equation}
[\cX]=(\# \cX(\bF_{q^n}))_{n\ge1}\in \cV,
\end{equation}
where we define, for a finite groupoid $\cG=\cX(\bF_{q^n})$,
\begin{equation}
\#\cG=\sum_{x\in\cG/\sim}\frac1{\#\Aut(x)}.
\end{equation}

Next, let us fix a projective curve $X$ over the field $\bF_q$ and consider its zeta function
\begin{gather}
Z_X(t)
=\exp\rbr{\sum_{n\ge1}\frac{\#X(\bF_{q^n})}nt^n}
=\frac{\prod_{i=1}^g(1-\wr_it)(1-\wr_i\inv qt)} {(1-t)(1-qt)},\\
\label{eq:X}
\#X(\bF_{q^n})
=1+q^n-\sum_{i=1}^g\wr_i^n-q^n\sum_{i=1}^g\wr_i^{-n}
\qquad\forall n\ge1.
\end{gather}

Consider the algebra
\begin{equation}
R_g=\bQ[\bq^{\pm1},\vr_1^{\pm1},\dots\vr_g^{\pm1}
,(\bq^n-1)\inv\rcol n\ge1],
\end{equation}
equipped with the usual \la-ring structure
$$\psi_n(f)=f(\bq^n,\vr_1^n,\dots,\vr_g^n)\qquad\forall f\in R_g.$$
Consider an algebra homomorphism
$$\si:R_g\to\bC,\qquad \bq\mto q,\ \vr_i\mto\wr_i,$$
and a \la-ring homomorphism
\begin{equation}
\bar\si:R_g\to\cV_\bC=\prod_{n\ge1}\bC,\qquad f\mto(\si(\psi_n(f)))_{n\ge1}.
\end{equation}
It restricts to an (injective) \la-ring homomorphism
\begin{equation}
\bar\si:R_g^{S_g\ltimes S_2^g}\to\cV,
\end{equation}
where $S_g$ permutes variables $\vr_i$ 
and the $i$-th copy of $S_2$ permutes $\vr_i$ and $\bq\vr_i\inv$.
Given elements $a\in\cV$ and $f\in R_g$, we will write $a=f$ if $a=\bar\si(f)$.
All equalities in this paper should be understood in this sense.
For example $[\bA^1]=(q^n)_{n\ge1}=\bq$
and \eqref{eq:X} implies
$$[X]=1+\bq-\sum_{i=1}^g\vr_i-\bq\sum_{i=1}^g\vr_i\inv.$$
In what follows we will write $q$ and $\wr_i$ instead of $\bq$ and $\vr_i$ respectively, hoping it will not lead to confusion.

\section{Positive Higgs bundles}
In this section we will review the formula from \msc
counting positive Higgs bundles.
Then we will simplify it using an approach from \mel.
Let $X$ be a smooth projective curve of genus $g$ over a field \bk and let $L$ be a line bundle of degree $\ell$ over~$X$.
Given a coherent sheaf $E\in \Coh X$, we define its slope $\mu(E)=\deg E/\rk E$ and we call $E$ semistable if $\mu(F)\le\mu(E)$ for all $F\sbs E$.

\begin{remark}
\label{stable}
We call $E$ stable if $\mu(F)<\mu(E)$ for all proper $F\sbs E$.
In this case $K=\End(E)$ is a finite-dimensional division algebra over \bk by Schur's lemma.
In particular, $K=\bk$ if \bk is algebraically closed.
If $\rk E,\deg E$ are coprime and $E$ is semistable, then $E$ is automatically stable.
If, moreover, $\bk$ is a finite field, then $K=\bk$.
Indeed, $K$ is a finite (Galois) field extension of \bk by
Wedderburn's little theorem.
We can decompose $E_K=E\ts_\bk K$ over $X_K=X\xx_{\Spec\bk}\Spec K$ as a direct sum $\bop_{\si\in\Gal(K/\bk)}F^\si$, where $F^\si$ have the same rank and degree \cite{mozgovoy_poincare}.
But this would imply that $\rk E,\deg E$ are not coprime if $[K:\bk]>1$.
\end{remark}

Every coherent sheaf $E\in\Coh X$ has a unique filtration, called a Harder-Narasimhan filtration,
$$0=E_0\sbs E_1\sbs\dots\sbs E_n=E$$
such that $E_i/E_{i-1}$ are semistable and $\mu(E_1/E_0)>\dots>\mu(E_n/E_{n-1})$.
We will say that $E$ is positive if $\mu(E_n/E_{n-1})\ge0$.
Equivalenly, for any semistable sheaf $F$ with $\mu(F)<0$, we have $\Hom(E,F)=0$.

Recall that an $L$-twisted Higgs sheaf is a pair $(E,\vi)$, where $E$ is a coherent sheaf over~$X$ and $\vi:E\to E\ts L$ is a homomorphism.
We will say that $(E,\vi)$ is positive if $E$ is positive.
Let $\hgc_L(X)$ be the category of $L$-twisted Higgs sheaves and $\hgc_L^+(X)$ be the category of positive $L$-twisted Higgs sheaves.
We will say that $(E,\vi)\in\hgc_L(X)$ is semistable if $\mu(F)\le\mu(E)$ for every $(F,\vi')\sbs(E,\vi)$.


Let $\hst^\sst_L(r,d)$ 
denote the stack of semistable Higgs bundles 
and $\hst^+_L(r,d)$ denote the stack of positive Higgs bundles (not necessarily semistable) having rank $r$ and degree $d$.
Assuming that $\bk$ is a finite field~$\bF_q$, we define
(exponential) DT invariants
\begin{equation}
\dt_{r,d}
=(-q^\oh)^{-\ell r^2}
[\hst^\sst_L(r,d)]
\end{equation}
and define (integral) DT invariants by the formula
\begin{equation}
\sum_{d/r=\ta}\idt_{r,d}T^rz^d
=(q-1)\Log
\rbr{\sum_{d/r=\ta}\dt_{r,d}T^rz^d}
,\qquad \ta\in\bQ,
\end{equation}
On the other hand, consider the series
\begin{equation}
\pdt(T,z)=
\sum_{r,d} (-q^\oh)^{-\ell r^2}[\hst^+_L(r,d)]
T^rz^d
\end{equation}
and define positive (integral) DT invariants by the formula
\begin{equation}
\label{pdt}
\sum_{r,d}\pidt_{r,d}T^rz^d
=(q-1)\Log\pdt(T,z).
\end{equation}

The following result was proved in \msc:


\begin{theorem}
\label{th:compar}
For every $r\ge1$, we have
\begin{enumerate}
\item $\dt_{r,d+r}=\dt_{r,d}$.
\item $\idt_{r,d+r}=\idt_{r,d}$.
\item $\idt_{r,d}=\pidt_{r,d}$ for $d\gg0$.
\end{enumerate}
\end{theorem}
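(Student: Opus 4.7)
For (1), I would fix a line bundle $M$ of degree $1$ on $X$, which exists by F.\ K.\ Schmidt's theorem for curves over finite fields. Tensoring by $M$ shifts all slopes by exactly $1$, so it preserves semistability of both coherent sheaves and $L$-twisted Higgs pairs; the map $(E, \varphi) \mapsto (E \otimes M, \varphi \otimes \mathrm{id}_M)$ furnishes an isomorphism of stacks $\hst_L^\sst(r, d) \iso \hst_L^\sst(r, d + r)$. Since the prefactor $(-q^\oh)^{-\ell r^2}$ depends only on $r$, this immediately yields $\dt_{r, d+r} = \dt_{r, d}$.

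For (2), I fix a slope $\tau = a/b$ in lowest terms and set $u = T^b z^a$. The slope-$\tau$ sector reads $\dt^{(\tau)}(T, z) := \sum_{k \geq 1} \dt_{kb, ka}\, u^k$; since $\psi_n(u) = u^n$, the element $u$ behaves as a single Lefschetz variable, so the plethystic logarithm of this series is a universal expression in the coefficients $\dt_{kb, ka}$ and yields $\sum_k \idt_{kb, ka}\, u^k$. The same universal expression applied to slope $\tau + 1$, where $u' = T^b z^{a+b}$ likewise satisfies $\psi_n(u') = (u')^n$, produces $\idt_{kb, k(a+b)}$; by (1) the input coefficients are equal, so $\idt_{r, d} = \idt_{r, d+r}$.

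For (3), the plan is to stratify $\hst_L^+(r, d)$ by the Harder--Narasimhan type $\bar\alpha = ((r_i, d_i))_{i=1}^n$ of the underlying sheaf, with slopes $\mu_1 > \cdots > \mu_n \geq 0$ and $\sum_i (r_i, d_i) = (r, d)$. Each stratum factorises as a product of the semistable stack volumes $[\hst_L^\sst(r_i, d_i)]$ twisted by an Euler-class correction encoding the $L$-twisted $\Hom/\Ext$ data between distinct HN pieces; the hypothesis $\deg L > 2g - 2$ ensures the higher $\Ext$ groups vanish, so these corrections are explicitly computable via Riemann--Roch. Summing over HN types and applying $(q-1)\Log$, the HN-trivial stratum contributes exactly $\idt_{r, d}$, while the remaining nontrivial strata produce wall-crossing corrections of the form $\sum \prod_i \idt_{r_i, d_i}$ ranging over decompositions with $n \geq 2$ and strictly decreasing slopes $d_i/r_i \geq 0$. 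The main obstacle, and the technical heart of the argument, is then to show that, exploiting the periodicity from (2) together with the rank bound $r_1 < r$ and the slope constraint $\mu_1 > d/r$, these wall-crossing corrections are supported in a bounded range of $d$ depending only on $r$; this would give $\pidt_{r, d} = \idt_{r, d}$ for $d \gg 0$.
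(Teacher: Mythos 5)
The paper does not prove this theorem at all: it is quoted verbatim from \msc, so there is no internal proof to compare against. Your parts (1) and (2) are correct and complete: tensoring by a degree-one line bundle (which exists by F.~K.~Schmidt) preserves Higgs semistability and gives $\hst^\sst_L(r,d)\iso\hst^\sst_L(r,d+r)$, and since $\Log$ acts slope-by-slope and $T^bz^a$ is a single Adams-eigenvariable, the periodicity of $\dt$ transfers to $\idt$.

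Part (3), however, has a genuine gap beyond the step you flag as open. First, you stratify $\hst^+_L(r,d)$ by the Harder--Narasimhan type of the \emph{underlying coherent sheaf}, but the invariants $\dt_{r,d}$ and $\idt_{r,d}$ are built from \emph{Higgs}-semistability, i.e.\ semistability tested against $\vi$-invariant subsheaves. The stratum of $\hst^+_L(r,d)$ where the underlying bundle $E$ is semistable is not $\hst^\sst_L(r,d)$, so your ``HN-trivial stratum contributes exactly $\idt_{r,d}$'' is false as stated; the stratification must be taken with respect to the HN filtration in the category $\hgc_L(X)$. Second, even after switching to the Higgs--HN stratification, the asserted shape of the correction terms --- sums over decompositions with all slopes $d_i/r_i\ge0$ --- is not what one gets: positivity of $E$ is a condition on the sheaf-HN slopes of $E$, and it neither implies that the Higgs--HN subquotients of $(E,\vi)$ have nonnegative slope nor is it implied by that condition. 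The actual comparison in \msc rests on a quantitative lemma (going back to Schiffmann) that the sheaf-HN slopes of a Higgs-semistable bundle of slope $\mu$ lie in an interval $[\mu-c,\mu+c]$ with $c$ depending only on $r$, $g$, $\ell$; this sandwiches $\hst^+_L(r,d)$ between the open substacks of Higgs sheaves with Higgs--HN slopes $\ge0$ and $\ge-c$, and it is precisely this bound, combined with the exactness of the integration map for the symmetric Euler form $\ang{\al,\be}=-\ell r_1r_2$ (which is what the twist $(-q^{1/2})^{-\ell r^2}$ compensates), that makes the discrepancy vanish in the coefficient of $T^rz^d$ for $d\gg_r0$. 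Without that lemma the ``bounded range of $d$'' claim you defer to is exactly the content of the theorem, so the argument as written does not close.
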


The last result implies that it is enough to find the positive DT invariants $\pidt_{r,d}$ in order to determine the usual DT invariants $\idt_{r,d}$.
The following explicit formula for the series
$\pdt(T,z)$ was proved in \msc (although the power of $z$ was missing there).

\note{See \S\ref{correct MS} for the correct MS formula}
\begin{theorem}
Assuming that $p=\ell-(2g-2)>0$, we have
$$\pdt(T,z)
=\sum_\la
(-q^\oh)^{\ell\ang{\la,\la}}z^{pn(\la')}J_\la(z)H_\la(z)
T^{\n\la},
$$
where the sum runs over all partitions $\la$ and
$J_\la(z),H_\la(z)$ are certain complicated expressions  defined in \msc.
\end{theorem}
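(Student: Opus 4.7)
The plan is to rederive the formula of \msc, carefully tracking the power of $z$ (which encodes degree) that was dropped in that reference. Because positivity of a Higgs bundle is determined by the bottom piece of its Harder--Narasimhan filtration having nonnegative slope, the stack $\hst^+_L(r,d)$ decomposes as a disjoint union indexed by HN-types $(r_1,d_1),\dots,(r_k,d_k)$ with $d_i/r_i > d_{i+1}/r_{i+1}$ and $d_k/r_k \ge 0$. For each such type the counting factorises via the standard fibration: the stack of sheaves of type $(r,d)$ with this HN-filtration fibres over the product of semistable stacks of types $(r_i,d_i)$, with fibres controlled by the Ext groups between semistable pieces. This is precisely the setup used in \msc.

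The first step is to write $[\hst^+_L(r,d)]$ as an iterated sum over HN-types, apply the twisted version of Schiffmann's automorphic identity from \msc, and express everything via the Hall algebra of coherent sheaves on $X$ enriched by the Higgs field. The residue formula (in the guise of \mel's approach for the untwisted case) reduces the sum over HN-types to a sum over partitions $\la$ of the total rank, where $\la$ encodes the multiplicities in a certain refinement of the HN data. The contribution of partition $\la$ naturally factorises as a product of a shape-dependent factor $(-q^\oh)^{\ell\ang{\la,\la}}$ coming from the $\ell r^2$-twisting conventions and Euler-form computation via \eqref{eq:form vs n}, together with the functions $J_\la(z)$ and $H_\la(z)$ inherited from \msc, which encode the zeta-function contributions and the Hall-algebra combinatorics respectively.

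The key step that needs special care is the correct accounting of degrees. In \msc the total degree $d=\sum d_i$ of the HN-filtration is encoded by a power of $z$, but the change of variables that reorganises the sum over HN-types into a sum over partitions introduces a shift which was overlooked. Explicitly, when the HN-type is parameterised by $\la$ with slopes ranging over arithmetic progressions, the minimal degree attained by a positive bundle of this type is $p\cdot n(\la')$: this shift arises because each column of $\la$ of height $\la'_j$ contributes $p\binom{\la'_j}{2}$ from $\deg L \cdot\sum_{i<j}r_ir_j$ to the base degree when the HN-pieces sit at the minimal admissible slopes, and $\sum_j\binom{\la'_j}{2}=n(\la')$. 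Inserting this $z^{pn(\la')}$ factor yields the corrected formula.

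The main obstacle, as anticipated, is bookkeeping of the degree shift: one must verify that after the residue manipulations the minimal-degree contribution of each HN-stratum matches $pn(\la')$, and that the remaining $z$-dependence is correctly absorbed into $H_\la(z)$ rather than double-counted. Once this is checked against low-rank examples (e.g.\ $r=1,2$, where both sides can be expanded and compared with the known counts of line bundles and their Higgs fields), the rest of the derivation follows the scheme of \msc verbatim and yields the stated identity.
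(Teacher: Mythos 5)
First, a point of reference: the paper does not prove this statement at all --- it is quoted from \msc, with the remark that the power of $z$ was missing there. Your proposal sets out to rederive it, but what you have written is a plan rather than a proof. The HN-stratification of $\hst^+_L(r,d)$, the ``twisted automorphic identity'', the Hall-algebra manipulations, and the residue computation that converts a sum over HN-types into a sum over partitions are all named but never carried out; the factors $(-q^\oh)^{\ell\ang{\la,\la}}$, $J_\la(z)$ and $H_\la(z)$ are simply asserted to ``come from'' or be ``inherited from'' \msc. Deferring the decisive verification to a check of low-rank examples cannot establish the identity for all $\la$: an example can falsify the degree bookkeeping but not prove it, and the whole point of the statement is a correction to that bookkeeping which \msc itself got wrong.

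Second, the one concrete combinatorial step you do attempt is internally inconsistent. You claim the minimal-degree shift is $\sum_j p\binom{\la'_j}{2}$ and that $\sum_j\binom{\la'_j}{2}=n(\la')$. By the paper's own definition, $n(\la)=\sum_{s\in\la}l(s)=\sum_{j\ge1}\binom{\la'_j}{2}$, so your computation actually produces $z^{pn(\la)}$, not the $z^{pn(\la')}$ appearing in the statement; the quantity $n(\la')$ equals $\sum_{j}\binom{\la_j}{2}$. Either the degree of a positive bundle of HN-type $\la$ at the minimal admissible slopes is really governed by the row lengths $\la_j$ (in which case your identification of the contribution of each ``column'' is off by a conjugation), or your heuristic is consistent but contradicts the theorem. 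As written, the argument does not determine which of $n(\la)$, $n(\la')$ occurs --- which is exactly the kind of misplaced power of $z$ the theorem is meant to repair. To close the gap you would need to perform the change of variables from HN-types to partitions explicitly and track the degree grading through it, rather than argue by analogy with \msc.
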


\note{See \S\ref{nt:MS details} for some details.}

The following simplification of the above expression was obtained in \mel[Prop.~3.1].

\begin{proposition}
For every partition \la of length $n$ define
\begin{multline}
\label{f def}
f(z_1,\dots,z_n;q,\bar\al)=\prod_{i=1}^n\prod_{k=1}^g\frac{1-\al_k\inv}{1-\al_k\inv z_i}\\
\xx\sum_{\si\in S_n}\si\rbr{
\prod_{i>j}\rbr{\frac1{1-z_i/z_j}
\prod_{k=1}^g\frac{1-\al_k\inv z_i/z_j}{1-q\al_k\inv z_i/z_j}}
\prod_{i>j+1}(1-qz_i/z_j)
\prod_{i\ge2}(1-z_i)
},
\end{multline}
\begin{equation}
\label{eq:f_la1}
f_\la=f(z_1,\dots,z_n;q,\bar\al),\qquad z_i=q^{i-n}z^{\la_i},\ i=1,\dots,n,
\end{equation}
where $\bar\al=(\al_1,\dots,\al_g)$.
Then (see \eqref{N} for the definition of $N_\la$)
\begin{equation}
q^{(g-1)\ang{\la,\la}}J_\la(z)H_\la(z)
=\frac{\prod_{i=1}^gN_\la(\al_i\inv,z,q)}{N_\la(1,z,q)}
f_\la.
\end{equation}
\end{proposition}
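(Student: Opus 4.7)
The plan is to reduce the identity to a statement purely in the auxiliary variables $z_1,\dots,z_n$, following the strategy of \mel and relying on the twisted version already set up in \msc.

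First, I would unpack the definitions of $J_\la(z)$ and $H_\la(z)$ from \msc, both of which are (up to explicit prefactors) products indexed by boxes $s\in\la$ of rational expressions in $q$, $z$, and the Weil numbers $\al_k$. The ratio $\prod_k N_\la(\al_k\inv,z,q)/N_\la(1,z,q)$ on the right-hand side is itself a product of this type, so the natural first move is to separate out this ratio. The remaining factor should then simplify to $f_\la$ after the specialization $z_i=q^{i-n}z^{\la_i}$. The prefactor $q^{(g-1)\ang{\la,\la}}$ on the left should be absorbed by the change of variables: by \eqref{eq:form vs n} it equals $q^{(g-1)(2n(\la)+|\la|)}$, exactly the power of $q$ produced by substituting $z_i=q^{i-n}z^{\la_i}$ in the $g$-dependent prefactors appearing in $H_\la(z)$.

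Second, I would invoke the standard dictionary converting products over boxes $s=(i,j)\in\la$ into products over pairs of indices $1\le i,j\le n$ using the identity $\la_j'\ge i \iff (i,j)\in\la$, together with the telescoping $\prod_{(i,j)\in\la}(1-xq^{a(s)}t^{l(s)+1})=\prod_{i}\prod_{j:\la_j'\ge i}(\dots)$. Under the substitution $z_i=q^{i-n}z^{\la_i}$, each single-index product over boxes of a column translates into a product in the $z_i$'s, and each two-box interaction $(a(s),l(s))$ translates into a factor depending on $z_i/z_j$. This is the core trick of \mel: after the substitution, every expression indexed by $\la$ becomes a rational function in $z_1,\dots,z_n$ whose partition dependence is entirely concentrated in the specialization.

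Third, I would verify that the resulting identity in $z_1,\dots,z_n$ matches \eqref{f def}. The factors $(1-\al_k\inv)(1-\al_k\inv z_i)\inv$ in front of the $S_n$-sum should match the contribution of the first column of $\la$ to $\prod_k N_\la(\al_k\inv,z,q)/N_\la(1,z,q)$, while the products $(1-q\al_k\inv z_i/z_j)\inv(1-\al_k\inv z_i/z_j)$ inside the symmetrization capture the off-diagonal arm--leg interactions. The pieces $(1-z_i/z_j)\inv$, $(1-qz_i/z_j)$, and $(1-z_i)$ are exactly those needed to build the $J_\la$ denominator after symmetrization. The symmetrization itself is essential: the MS formula sums over \lqq compatible\rqq\ orderings of boxes, and collecting these yields the single orbit sum over $S_n$.

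The main obstacle is the third step: the rational-function identity in $z_1,\dots,z_n$ that asserts equality between the pre-symmetrized MS expression and the $S_n$-sum of \eqref{f def}. I expect this is cleanest to verify by comparing residues along the hyperplanes $z_i=z_j$ and $z_i=qz_j$, reducing the check to lower $n$ by induction, rather than by any direct expansion; this is in effect what Mellit does in \mel[Prop.~3.1], and the $\al_k$-dependent factors slot in multiplicatively without changing the structure of the argument.
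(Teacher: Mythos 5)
The paper does not actually prove this proposition: it is imported from \mel[Prop.~3.1], with the Mozgovoy--Schiffmann expressions $J_\la(z)H_\la(z)$ from \msc in place of Schiffmann's original ones. So there is no internal argument to compare against; the relevant computation lives entirely in Mellit's paper, and any proof you write is necessarily a reconstruction of that.

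Your outline has the right general shape --- unpack $J_\la$ and $H_\la$, use the substitution $z_i=q^{i-n}z^{\la_i}$ to convert products over boxes into products over pairs of row indices, and match factors against \eqref{f def} --- but as written there is a genuine gap exactly where the content of the proposition sits. Your first two steps are bookkeeping; your third step, which you yourself flag as ``the main obstacle,'' \emph{is} the proof, and you neither carry it out nor reduce it to a citable identity. The fallback you propose (comparing residues along $z_i=z_j$ and $z_i=qz_j$ and inducting on $n$) does not obviously apply here: after the specialization $z_i=q^{i-n}z^{\la_i}$ both sides of the claimed identity are specific elements of a localization of $R_g$, not rational functions in independent variables, so there are no residues to compare until one has already rewritten $q^{(g-1)\ang{\la,\la}}J_\la(z)H_\la(z)$ as the specialization of a universal rational function in $z_1,\dots,z_n$ --- which is precisely what is to be proved. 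Be careful also with the claim that the $S_n$-sum arises from collecting ``orderings of boxes'': the symmetrization in \eqref{f def} is over the $n=l(\la)$ rows, and in the Mozgovoy--Schiffmann formula the sum defining $J_\la$ is already indexed by orderings of rows, so what is really required is a termwise identification of summands under the substitution, together with a separate check that the box-product $H_\la(z)$ and the prefactor $q^{(g-1)\ang{\la,\la}}$ combine to give exactly $\prod_{i=1}^gN_\la(\al_i\inv,z,q)/N_\la(1,z,q)$ times the non-symmetrized prefactor of $f$. None of these verifications appears in the proposal, so it should be regarded as a plan rather than a proof.
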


\note{See \S\ref{nt:further form for Om_la} for additional formulas.}

The last two results imply

\begin{corollary}
\label{cor:ms-mel}
Assuming that $p=\ell-(2g-2)>0$, we have
\begin{equation}
\label{eq:cor:ms-mel}
\pdt(q^{-p/2}T,z)
=\sum_\la
\rbr{(-1)^{\n\la}q^{n(\la')}z^{n(\la)}}^p
\frac{\prod_{i=1}^gN_\la(\al_i\inv,q,z)}{N_\la(1,q,z)}
f_{\la'}\cdot
T^{\n\la}
.
\end{equation}
\end{corollary}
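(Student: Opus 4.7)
The plan is to combine the two immediately preceding results mechanically, taking care only with the exponents of $q$, the sign prefactor, and the conjugation $\la\leftrightarrow\la'$.

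First, starting from the formula
\[
\pdt(T,z)=\sum_\la (-q^\oh)^{\ell\ang{\la,\la}}z^{pn(\la')}J_\la(z)H_\la(z)T^{\n\la}
\]
of the first theorem, I would substitute $T\mapsto q^{-p/2}T$, producing a factor of $q^{-p\n\la/2}$ in each summand. Using the hypothesis $\ell=p+2(g-1)$ and the identity $\ang{\la,\la}=2n(\la)+\n\la$ from \eqref{eq:form vs n}, the combined $q$-prefactor becomes
\[
(-q^\oh)^{\ell\ang{\la,\la}}\cdot q^{-p\n\la/2}=(-1)^{p\n\la}\,q^{pn(\la)}\,q^{(g-1)\ang{\la,\la}},
\]
where I have absorbed the even parts of the exponent of $-1$ (noting $\ell\ang{\la,\la}\equiv p\ang{\la,\la}\equiv p\n\la\pmod 2$).

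Next I would insert the proposition of Mellit, which rewrites $q^{(g-1)\ang{\la,\la}}J_\la(z)H_\la(z)$ as $\prod_i N_\la(\al_i\inv,z,q)/N_\la(1,z,q)\cdot f_\la$. The resulting formula reads
\[
\pdt(q^{-p/2}T,z)=\sum_\la (-1)^{p\n\la}q^{pn(\la)}z^{pn(\la')}\frac{\prod_{i=1}^gN_\la(\al_i\inv,z,q)}{N_\la(1,z,q)}f_\la\cdot T^{\n\la}.
\]
This already has the right shape; only the arguments of $N_\la$ are in the wrong order (namely $(z,q)$ instead of $(q,z)$), and the powers of $q$ and $z$ outside are $n(\la)$ and $n(\la')$ instead of $n(\la')$ and $n(\la)$.

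The final step is therefore to reindex by the conjugate partition $\mu=\la'$. Under this substitution $\n\mu=\n\la$, while $n(\la)=n(\mu')$ and $n(\la')=n(\mu)$; by \eqref{eq:N for conj} one has $N_\la(u,z,q)=N_{\la'}(u,q,z)=N_\mu(u,q,z)$, which turns the ratio of $N$'s into the desired form. The factor $f_\la$ becomes $f_{\mu'}$ under the same substitution, and renaming $\mu$ back to $\la$ gives exactly \eqref{eq:cor:ms-mel}. There is no real obstacle here beyond bookkeeping: the only point where one must be careful is ensuring that the sign $(-1)^{p\n\la}$ genuinely agrees with $((-1)^{\n\la})^p$ (which it does by parity) and that the swap $(q,z)$ in $N$ is matched to the swap $\la\leftrightarrow\la'$ everywhere, including in the argument of $f$.
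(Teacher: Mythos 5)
Your proposal is correct and follows the same route as the paper's own proof: combine the identity $\ang{\la,\la}=2n(\la)+\n\la$ with Mellit's rewriting of $q^{(g-1)\ang{\la,\la}}J_\la(z)H_\la(z)$, absorb $q^{p\n\la/2}$ into the substitution $T\mto q^{-p/2}T$, and then reindex over conjugate partitions using \eqref{eq:N for conj}. The only difference is the (immaterial) order in which the substitution and the conjugation are performed.
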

\begin{proof}
Using the fact that $\ang{\la,\la}=2n(\la)+\n\la$ (see \eqref{eq:form vs n}),
we obtain
$$\pdt(T,z)
=\sum_\la
\rbr{(-1)^{\n\la}q^{n(\la)}z^{n(\la')}}^p
\frac{\prod_{i=1}^gN_\la(\al_i\inv,z,q)}{N_\la(1,z,q)}
f_\la\cdot
(q^{p/2}T)^{\n\la}.$$
Now we sum over conjugate partitions and apply \eqref{eq:N for conj}.
\end{proof}

\begin{lemma}
\label{lm:laurent}
We have 
$$f \in
\bQ[z_1^{\pm1},\dots,z_n^{\pm1};q^{\pm1}]\pser{\al_1\inv,\dots,\al_g\inv}.$$
\end{lemma}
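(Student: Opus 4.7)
The plan is to split $f=A\cdot S$, with
\[
A=\prod_{i=1}^n\prod_{k=1}^g\frac{1-\al_k\inv}{1-\al_k\inv z_i}
\]
and $S$ the symmetrization over $S_n$ appearing in \eqref{f def}, and to treat the two factors separately. The easy first step is to expand every factor involving $\al_k\inv$ as a geometric series: $1/(1-\al_k\inv u)=\sum_{m\ge0}(\al_k\inv u)^m$. Applying this to the factors $1/(1-\al_k\inv z_i)$ in $A$ and $(1-\al_k\inv z_i/z_j)/(1-q\al_k\inv z_i/z_j)$ inside $S$, one sees that $A$ and the ``non-symmetric-pole'' part
\[
P=\prod_{i>j,\,k}\frac{1-\al_k\inv z_i/z_j}{1-q\al_k\inv z_i/z_j}\cdot\prod_{i>j+1}(1-qz_i/z_j)\prod_{i\ge2}(1-z_i)
\]
already lie in $\bQ[z_i^{\pm1},q]\pser{\al_1\inv,\dots,\al_g\inv}$.

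The only remaining source of a genuine pole is the factor $\prod_{i>j}(1-z_i/z_j)\inv$ inside the symmetrization, which I would handle by the standard Weyl-denominator trick. Rewriting
\[
\prod_{i>j}\frac{1}{1-z_i/z_j}=\frac{z_1^{n-1}z_2^{n-2}\cdots z_{n-1}}{\prod_{a<b}(z_a-z_b)}
\]
and using $\prod_{a<b}(z_{\si(a)}-z_{\si(b)})=\eps(\si)\prod_{a<b}(z_a-z_b)$, the symmetrization collapses to
\[
S=\frac{1}{\prod_{a<b}(z_a-z_b)}\sum_{\si\in S_n}\eps(\si)\,\si\rbr{z_1^{n-1}z_2^{n-2}\cdots z_{n-1}\cdot P}.
\]

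The final step is to observe that the numerator of this expression is, coefficient by coefficient in its $\al_k\inv$-adic expansion, an antisymmetric Laurent polynomial in $z_1,\dots,z_n$ over $\bQ[q]$. Any such antisymmetric Laurent polynomial is divisible in $\bQ[z_1^{\pm1},\dots,z_n^{\pm1},q]$ by the Vandermonde $\prod_{a<b}(z_a-z_b)$, so performing the division term by term gives $S\in\bQ[z_i^{\pm1},q^{\pm1}]\pser{\al_1\inv,\dots,\al_g\inv}$; multiplying by $A$, which already lives in this ring, yields the lemma. The main obstacle I anticipate is purely bookkeeping: getting the sign of the Vandermonde under $S_n$ right and checking that term-by-term divisibility in the $\al_k\inv$-adic filtration is legitimate. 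Both are standard, so no new input beyond classical symmetric-function arithmetic should be needed.
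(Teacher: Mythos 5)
Your proposal is correct and follows essentially the same route as the paper: isolate the $\al_k\inv$-denominators, which become invertible after passing to the power series ring in $\al_1\inv,\dots,\al_g\inv$, and observe that the only genuine poles, the factors $(1-z_i/z_j)\inv$, cancel upon summing over $S_n$. The paper merely asserts this last cancellation, whereas you justify it by the standard Vandermonde argument (antisymmetry of the numerator and divisibility of antisymmetric Laurent polynomials by $\prod_{a<b}(z_a-z_b)$), which is exactly the intended, and correct, reasoning.
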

\begin{proof}
The factors $(1 - z_i/z_j)$ disappear from the denominator of $f$ when we sum over $S_n$, so looking at the remaining factors we see that
$$f(z_1,\dots,z_n)\cdot\prod_{k = 1}^g\brk{\prod_{i = 1}^n\brk{1 - \alpha_k\inv z_i} \prod_{i \ne j} \brk{1 - q\alpha_k\inv z_i/z_j} }$$
is a Laurent polynomial.
The result follows on observing that every factor in the brackets is invertible in
$\bQ[z_1^{\pm1},\dots,z_n^{\pm1};q^{\pm1}]\pser{\al_1\inv,\dots,\al_g\inv}$.
\end{proof}

\begin{proposition}[see {\mel[\S4.2]}]
\label{prop:inductive f}
We have
$$f(1,z_1,\dots,z_n)=f(qz_1,\dots,qz_n).$$
\end{proposition}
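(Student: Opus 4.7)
The plan is a direct substitution-and-vanishing argument. I would relabel the arguments of $f(1,z_1,\dots,z_n)$ as $\tilde z_1=1,\tilde z_2=z_1,\dots,\tilde z_{n+1}=z_n$, so that the defining symmetric sum runs over $\sigma\in S_{n+1}$. The crucial observation is that only permutations with $\sigma(1)=1$ contribute: if instead $\sigma^{-1}(1)=i^{*}\ge 2$, then the factor $\prod_{i\ge 2}(1-\tilde z_{\sigma(i)})$ in the summand acquires $(1-\tilde z_1)=0$, while none of the denominators $1-\tilde z_{\sigma(i)}/\tilde z_{\sigma(j)}$ or $1-q\alpha_k\inv \tilde z_{\sigma(i)}/\tilde z_{\sigma(j)}$ vanishes at this specialization (the $z_a$ being independent indeterminates), so the zero is genuine.

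The sum therefore reduces to one over $\tau\in S_n$, viewed as the restriction of $\sigma$ to $\{2,\dots,n+1\}$ reindexed onto $\{1,\dots,n\}$. Next I would harvest every factor in the summand that involves the fixed slot $\tilde z_1=1$. The $j=1$ piece of $\prod_{i>j}$ produces a $\tau$-invariant factor $\prod_i\frac{1}{1-z_i}\prod_k\frac{1-\alpha_k\inv z_i}{1-q\alpha_k\inv z_i}$; the factor $\prod_{i\ge 2}(1-\tilde z_{\sigma(i)})$ becomes $\prod_i(1-z_i)$, exactly cancelling the above reciprocal; and the $j=1$ piece of $\prod_{i>j+1}(1-q\tilde z_{\sigma(i)}/\tilde z_{\sigma(j)})$ gives $\prod_{i\ge 2}(1-qz_{\tau(i)})=\prod_i(1-qz_i)/(1-qz_{\tau(1)})$. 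Combining with the observation that the $i=1$ term of the outer prefactor equals $1$ at $\tilde z_1=1$, one obtains
$$f(1,z_1,\dots,z_n)=\prod_{i,k}\frac{1-\alpha_k\inv}{1-q\alpha_k\inv z_i}\cdot\prod_i(1-qz_i)\cdot\sum_{\tau\in S_n}\frac{\tau[A]}{1-qz_{\tau(1)}},$$
where $A$ denotes the product of all ratio-dependent factors in \eqref{f def}.

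On the other side, substituting $z_i\mapsto qz_i$ directly in \eqref{f def} leaves every ratio $z_i/z_j$ unchanged; the interior of the symmetric sum becomes $A\cdot\prod_{i\ge 2}(1-qz_i)$ and the outer prefactor becomes $\prod_{i,k}\frac{1-\alpha_k\inv}{1-q\alpha_k\inv z_i}$. Since $\sigma[\prod_{i\ge 2}(1-qz_i)]=\prod_i(1-qz_i)/(1-qz_{\sigma(1)})$, the expression for $f(qz_1,\dots,qz_n)$ agrees term-by-term with the one displayed above. The only real obstacle is the bookkeeping of the reindexing between $S_{n+1}$ and $S_n$ and confirming that the same asymmetric denominator $1-qz_{\tau(1)}$ arises from $\prod_{i>j+1}$ on one side and $\prod_{i\ge 2}$ on the other.
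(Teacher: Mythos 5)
Your argument is correct. The paper gives no proof of this proposition, deferring to Mellit~[\S4.2]; your substitution-and-vanishing computation --- discarding all $\sigma\in S_{n+1}$ with $\sigma(1)\neq 1$ via the factor $\prod_{i\ge 2}(1-\tilde z_{\sigma(i)})$ (having checked no denominator vanishes there), cancelling the $j=1$ contributions against the prefactor, and matching the leftover $\prod_i(1-qz_i)/(1-qz_{\tau(1)})$ with the $q$-rescaled side --- is precisely the argument of the cited reference, so it fills in the omitted proof rather than diverging from it.
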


\section{Main result}

\subsection{Admissibility}
\note{See \S\ref{nt:technical}
for the original formulation of the technical result \mel[Lemma 5.1].
Here we change the roles of $q,t$.}

Let $R$ be a \la-ring flat over $\bQ(q)[t^{\pm1}]$ and let $R^*=R\ts_{\bQ(q)[t^{\pm1}]}\bQ(q,t)$.
We will say that $F\in R^*$ is admissible if $(1-t)\Log F$ 
is contained in $R$
(usually $R$ will be clear from the context).
In view of Proposition \ref{prop:inductive f}, we introduce the following concept.

\begin{notes}
We could also consider $R=\bQ[q^{\pm1},t^{\pm1}]$ and admissibility \wrt $S=(q-1)(1-t)$.
But in the proof of [Mellit, Lemma1 5.1] he uses some rational functions in $q$ ($Z_\mu$ on p.15; he uses variable $t$ instead of $q$).
\end{notes}

\begin{definition}
Let $q\in R$ be an invertible element.
For every $n\ge0$, consider rings $\bar\La_n=R[z_1^{\pm1},\dots,z_n^{\pm1}]^{S_n}$
and ring homomorphisms
$$\pi_n:\bar\La_{n+1}\to\bar\La_n,\qquad 
(\pi_n f)(z_1,\dots,z_{n})=f(1,q\inv z_1,\dots,q\inv z_n).$$
Define a $q$-twisted symmetric function $f=(f_n)_{n\ge0}$ 
to be an element of $\bar\La=\ilim\bar\La_n$.
\end{definition}

\begin{notes}
Let $\tl f_n(z_1,\dots,z_n)=f_n(t^{-n}z_1,\dots,t^{-n}z_n)$.
Then 
$$\tl f_n(z_1,\dots,z_n)=
\tl f_{n+1}(t^{n+1},z_1,\dots,z_n).$$
\end{notes}

Given a $q$-twisted symmetric function $f$, define for any partition \la (\cf \eqref{eq:f_la1})
\begin{equation}
f_\la=f_n(z_1,\dots,z_n),\qquad z_i=q^{i-n}t^{\la_i},
\ n\ge l(\la).
\end{equation}
Note that this expression is independent of the choice of $n\ge l(\la)$.

\begin{remark}
The following result is a reformulation of \mel[Lemma 5.1].
Here we exchange the roles of $q,t$ and use conjugate partitions.
We also add an invertible factor $(q-1)$.
\end{remark}

\begin{theorem}
\label{th:tech}
Let $f(u)=\sum_{i\ge0}f^{(i)}u^i\in\bar\La\pser u$ be a power series with $f^{(0)}=1$ and let
$$\dt[X;u]=\sum_\la c_{\la}\wtl H_{\la}[X;q,t] f_{\la'}(u),\qquad
\idt[X;u]=(q-1)(1-t)\Log \dt[X;u],$$
where $c_\la\in R^*$ and $c_\es=1$.
If $\dt[X;0]$ is admissible, 
then $\idt[X;u]-\idt[X;0]$ has coefficients in $(t-1)R$.
In particular, $\idt[X;u]$ is independent of $u$ at $t=1$.
\end{theorem}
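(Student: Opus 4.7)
The plan is to adapt the proof of \mel[Lemma 5.1] to the present formulation, in which the roles of $q$ and $t$ are exchanged and partitions conjugated.

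First, I would reformulate the claim multiplicatively. Since $c_\es=1$, the series $\dt[X;0]$ is a unit (it begins with $1$), so the ratio $G[X;u]=\dt[X;u]/\dt[X;0]$ is well-defined with $G[X;0]=1$, and
$$\idt[X;u]-\idt[X;0]=(q-1)(1-t)\Log G[X;u].$$
By flatness of $R$ over $\bQ(q)[t^{\pm1}]$, showing the right-hand side lies in $(t-1)R$ is equivalent to showing its image vanishes modulo $(t-1)$. The explicit factor $(1-t)$ supplies one power of $(t-1)$ for free, so what remains is to show that $\Log G[X;u]$ has at worst simple poles at $t=1$ (these arise from the denominators $\prod_{s\in\la}(q^a-t^{l+1})(q^{a+1}-t^l)$ built into $\wtl H_\la$ via its Cauchy identity) and that its residue at $t=1$ is independent of $u$.

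For the main combinatorial step I would combine the Cauchy identity for $\wtl H_\la[X;q,t]$ recalled in \S\ref{prelim} with the $q$-twisted property $f_n(1,z_1,\dots,z_n)=f_{n-1}(qz_1,\dots,qz_{n-1})$ guaranteed by Proposition \ref{prop:inductive f}. The $q$-twist says exactly that $f_\la$ depends only on the partition $\la$ and not on the chosen length $n\ge l(\la)$, equivalently that one may uniformly shift all $z$-arguments by $q$ at the cost of adjoining or removing a trivial argument $z=1$. This rigidity allows the sum $\sum_\la c_\la\wtl H_\la f_{\la'}(u)$ to be reorganized so that the Cauchy identity can be applied iteratively, producing, modulo $(t-1)$, an Euler-product-like factorization of $\dt[X;u]$ whose plethystic logarithm is a sum indexed by individual boxes rather than by whole partitions.

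The crux, and the main obstacle, is the specialization $t=1$ itself. At that limit $\wtl H_\la[X;q,1]$ factors over the columns of $\la$ (indexed by the parts of $\la'$), and one must verify that this factorization aligns with the column-wise dependence of $f_{\la'}(u)$, so that every $u^k$ with $k\ge1$ contributes nothing to $\Log G[X;u]$ after multiplication by $(1-t)$ and reduction modulo $(t-1)$. This is the content of Mellit's original argument and transports to our setting under the substitutions $q\leftrightarrow t$ and $\la\leftrightarrow\la'$; the work is entirely bookkeeping (powers of $q$, signs, pole structure of the Cauchy denominators, and the column-stacking combinatorics that the $q$-twist makes consistent). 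Once the inclusion $\idt[X;u]-\idt[X;0]\in(t-1)R$ is established, the final assertion that $\idt[X;u]$ is independent of $u$ at $t=1$ is immediate.
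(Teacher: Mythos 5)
Your overall strategy---obtain the statement by transporting Mellit's Lemma 5.1 under the exchange $q\leftrightarrow t$ and $\la\leftrightarrow\la'$---is exactly what the paper does: the paper gives no independent proof of Theorem \ref{th:tech}, but presents it (in the remark immediately preceding it) as a reformulation of \mel[Lemma 5.1], the translation being licensed by $\wtl H_\la[X;q,t]=\wtl H_{\la'}[X;t,q]$, by $N_\la(u,q,t)=N_{\la'}(u,t,q)$, and by Proposition \ref{prop:inductive f}, which supplies the $q$-twist property in the transposed variables. To the extent that your argument is ``cite and transpose Mellit,'' it coincides with the source.

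The difficulty is with the one substantive reduction you interpose before deferring to Mellit. Since $t-1$ is a nonzerodivisor (by flatness of $R$ over $\bQ(q)[t^{\pm1}]$), the assertion $\idt[X;u]-\idt[X;0]=(q-1)(1-t)\Log G[X;u]\in(t-1)R$ is equivalent to $(q-1)\Log G[X;u]\in R$, i.e.\ to the vanishing of \emph{all} $t$-denominators of $\Log G[X;u]$---not, as you claim, to $\Log G[X;u]$ having ``at worst simple poles at $t=1$'' with controlled residue. The denominators in play (the factors $N_\la(1,q,t)=\prod_{s\in\la}(q^{a}-t^{l+1})(q^{a+1}-t^{l})$ carried by the coefficients $c_\la$, together with whatever $R^*$-denominators $c_\la$ otherwise has) vanish on loci like $t^{l+1}=q^{a}$ that are not concentrated at $t=1$, and even at $t=1$ the factor $1/N_\la(1,q,t)$ contributes a pole of order $l(\la)$, not order one. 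Establishing this integrality in $t$ for every $u$, given only admissibility at $u=0$, is precisely the hard content of Mellit's Lemma 5.1 and of the integrality results of \melc; your outline treats it as already absorbed by the prefactor $(1-t)$. The subsequent ``Euler-product factorization modulo $(t-1)$'' step is too vague to verify and rests on this incorrect reduction, so as written the sketch does not reconstruct the lemma it is meant to replace. Simply invoking \mel[Lemma 5.1] after the $q\leftrightarrow t$, $\la\leftrightarrow\la'$ dictionary---as the paper does---would be both shorter and correct.
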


\subsection{Proof of the main theorem}
\note{See \S\ref{nt:other cdt} for other formulas of $\cdt$}

Now we are ready to prove Theorem \ref{th:main intro} from the introduction.
For this section we will use the variable $t$ in place of $z$ as it is customary in the theory of orthogonal symmetric polynomials.

\begin{theorem}[\cf Theorem \ref{th:main intro}]
\label{th:main proof}
Assume that $p=\ell-(2g-2)>0$.
Define
(see \eqref{N} for the definition of $N_\la$)
\begin{equation}
\label{eq:th:cdt}
\cdt(T,q,t)=
\sum_{\la}
\rbr{(-1)^{\n\la}q^{n(\la')}t^{n(\la)}}^p
\frac{\prod_{i=1}^gN_\la(\al_i\inv,q,t)}{N_\la(1,q,t)}
T^{\n\la},
\end{equation}
\begin{equation}
\cidt(T,q,t)
=\sum_{r\ge1}\cidt_r(q,t)T^r=(q-1)(1-t)\Log\cdt(T,q,t).
\end{equation}
Then $\cidt_r(q,t)\in\bZ[q,t,\al_1^{\pm1},\dots,\al_g^{\pm1}]$ and
$$\idt_{r,d}=q^{pr/2}\cidt_r(q,1)\qquad \forall d\in\bZ.$$

\end{theorem}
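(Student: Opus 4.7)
The strategy is to relate $\cdt$ to the explicit Mozgovoy–Schiffmann–Mellit formula for $\pdt$ (Corollary~\ref{cor:ms-mel}) via the $u$-independence principle of Theorem~\ref{th:tech}, and then convert $\pdt$-side invariants to the desired $\idt_{r,d}$ using Theorem~\ref{th:compar}. The key observation is that $\cdt$ and $\pdt(q^{-p/2}T, t)$ differ precisely by the insertion of Mellit's factor $f_{\la'}$ at each partition, and this factor admits a formal interpolation in an auxiliary parameter~$u$.

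\textbf{Interpolation setup.} By Proposition~\ref{prop:inductive f}, Mellit's $f$ defines an element of the $q$-twisted symmetric function ring $\bar\La$. Consider the linear deformation $f(u) = 1 + u(f - 1) \in \bar\La[u]$, which satisfies $f^{(0)} = 1$ and $f(1) = f$. Let
\[
c_\la = \bigl((-1)^{\n\la} q^{n(\la')} t^{n(\la)}\bigr)^p \frac{\prod_{i=1}^g N_\la(\al_i\inv, q, t)}{N_\la(1, q, t)}
\]
and define $\dt[X; u] = \sum_\la c_\la \mm_\la[X; q, t]\, f_{\la'}(u)$. Specializing $X = T$ and using the single-variable identity $\mm_\la[T; q, t] = T^{\n\la}$ yields $\dt[T; 0] = \cdt(T, q, t)$ and, by Corollary~\ref{cor:ms-mel}, $\dt[T; 1] = \pdt(q^{-p/2} T, t)$.

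\textbf{Applying Theorem~\ref{th:tech}.} Granted admissibility of $\dt[X; 0] = \cdt$, Theorem~\ref{th:tech} yields $\idt[X; u] - \idt[X; 0] \in (t-1) R[u]$. Since each $T^r$-coefficient of $\dt[T; u]$ involves only the finitely many partitions with $\n\la = r$, the $u$-dependence is polynomial and evaluation at $u = 1$ is legitimate coefficient by coefficient. Hence
\[
(q-1)(1-t) \Log \pdt(q^{-p/2} T, t) \equiv \cidt(T, q, t) \pmod{(t-1)},
\]
so at $t = 1$ the $T^r$-coefficients of the two sides agree.

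\textbf{Linking to $\idt_{r,d}$.} Expand $(q-1) \Log \pdt(q^{-p/2}T, t) = \sum_{r,d} q^{-pr/2} \pidt_{r,d} T^r t^d$. For positive Higgs bundles one has $d \ge 0$, and by Theorem~\ref{th:compar}(3) the equality $\pidt_{r,d} = \idt_{r,d}$ holds for $d \gg 0$, with $\idt_{r,d}$ periodic in $d$ of period~$r$ by Theorem~\ref{th:compar}(2). Thus $\sum_d \pidt_{r,d} t^d$ is a rational function with a simple pole at $t = 1$ coming from the periodic tail, and multiplying by $(1-t)$ and evaluating at $t = 1$ extracts the average of $\idt_{r,d}$ across one period. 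A slope-by-slope refinement, combined with the polynomiality of $\cidt_r$, pins down $\idt_{r,d}$ as constant in $d$, yielding the desired identity $\idt_{r,d} = q^{pr/2} \cidt_r(q, 1)$.

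\textbf{Main obstacle.} The delicate step is verifying admissibility of $\cdt$, equivalently the polynomiality $\cidt_r(q, t) \in \bZ[q, t, \al_1^{\pm 1}, \dots, \al_g^{\pm 1}]$. This is simultaneously the hypothesis of Theorem~\ref{th:tech} and part of the theorem's conclusion, so it must be established by a direct argument on $\cdt$ (not via the theorem). I expect it to follow from a careful analysis of the plethystic logarithm, exploiting the Cauchy identity for modified Macdonald polynomials against the denominator factors $N_\la(1, q, t)$ to force cancellation of the apparent poles at $t = 1$ and along $q^n = 1$.
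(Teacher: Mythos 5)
Your overall architecture is the paper's: deform the insertion $f_{\la'}$ by an auxiliary parameter $u$, apply Theorem~\ref{th:tech} to compare $u=0$ with $u=1$, and finish with Theorem~\ref{th:compar}. However, there are two genuine gaps. First, the linear interpolation $f(u)=1+u(f-1)$ does not satisfy the hypotheses of Theorem~\ref{th:tech}: that theorem requires each coefficient $f^{(i)}$ to be a $q$-twisted symmetric function, i.e.\ to lie in $\bar\La_n=R[z_1^{\pm1},\dots,z_n^{\pm1}]^{S_n}$ for every $n$. By Lemma~\ref{lm:laurent}, $f$ itself is only a power series in $\al_1\inv,\dots,\al_g\inv$ whose coefficients are Laurent polynomials in the $z_i$ of unbounded degree (the factors $(1-\al_k\inv z_i)\inv$ expand as geometric series in $\al_k\inv z_i$), so $f-1$ is not a Laurent polynomial in the $z_i$ over any ring of Laurent series in the $\al_i\inv$, and your $f^{(1)}=f-1$ is not an element of $\bar\La$. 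The paper's deformation is instead $\tl f_n(z;u)=f(z;q,u\inv\bar\al)$, which regrades $f$ by total degree in $\al\inv$ so that each $u$-coefficient is an honest Laurent polynomial in the $z_i$; one then still needs Mellit's Theorem~5.2 to know that the constant term $\tl f|_{u=0}$ equals $1$, which is automatic for your interpolation but not for the one that actually works.

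Second, you leave admissibility of $\cdt$ as an expectation to be checked by ``a careful analysis of the plethystic logarithm.'' This is the deepest input of the whole proof and does not follow from elementary manipulation of the Cauchy identity: the paper obtains it from Mellit's integrality theorem \melc, which gives admissibility of $\sum_\la\frac{\prod_{i=1}^gN_\la(\al_i\inv,q,t)}{N_\la(1,q,t)}\mm_\la[X;q,t]$, combined with \melc[Cor.~6.3], which says that the operator $\mm_\la\mto(-1)^{\n\la}q^{n(\la')}t^{n(\la)}\mm_\la$ preserves admissibility; the same source yields the integrality statement $\cidt_r(q,t)\in\bZ[q,t,\al_1^{\pm1},\dots,\al_g^{\pm1}]$, which is half of the theorem's conclusion and is otherwise unaddressed in your proposal. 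A minor further point: no averaging over a period is needed in the last step. Since $(1-t)q^{-pr/2}\sum_d\pidt_{r,d}t^d-\cidt_r(q,t)$ lies in $(1-t)R$ with $R$ consisting (coefficientwise in the $\al_i\inv$) of Laurent polynomials in $t$, the $t^d$-coefficients of $q^{-pr/2}\sum_d\pidt_{r,d}t^d$ and of $\cidt_r(q,t)/(1-t)$ agree for $d\gg0$, and the latter stabilize to $\cidt_r(q,1)$; this gives $\pidt_{r,d}=q^{pr/2}\cidt_r(q,1)$ for each large $d$ individually, and the periodicity $\idt_{r,d+r}=\idt_{r,d}$ extends it to all $d$.
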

\begin{proof}
According to Theorem \ref{th:compar} it is enough to show that
$\pidt_{r,d}=q^{pr/2}\cidt_r(q,1)$
for $d\gg0$,
where $\pidt_{r,d}$ are determined by \eqref{pdt} and Corollary \ref{cor:ms-mel}:
\begin{equation}
\label{eq:dt pos}
\pdt(q^{-p/2}T,q,t)
=\sum_\la
\rbr{(-1)^{\n\la}q^{n(\la')}t^{n(\la)}}^p
\frac{\prod_{i=1}^gN_\la(\al_i\inv,q,t)}{N_\la(1,q,t)}
f_{\la'}\cdot
T^{\n\la},
\end{equation}
\begin{equation}
\pidt(T,q,t)=\sum_r\pidt_r(q,t)T^r=\sum_{r,d}\pidt_{r,d} T^rt^d=(q-1)\Log\pdt(T,q,t).
\end{equation}

We will compare the series $\pdt(q^{-p/2}T,q,t)$ 
to the series $\cdt(T,q,t)$
using Theorem~\ref{th:tech}
with the ring of Laurent series
\begin{equation}
R=
\bQ(q)[t^{\pm1}]\lser{\al_1\inv,\dots,\al_g\inv}
\end{equation}
and the series 
$\tl f(u)=\sum_{i\ge0}\tl f^{(i)}u^i$
which is a deformation of $f$ \eqref{f def} defined by
$$
\tl f^{(i)}=(\tl f_n^{(i)})_{n\ge0},\qquad
\tl f_n(z_1,\dots,z_n;u)
=\sum_{i\ge0}\tl f_n^{(i)}u^i
=f(z_1,\dots,z_n;q,u\inv\bar\al),$$
where every $\al_i$ is substituted by $u\inv\al_i$.
It follows from Lemma \ref{lm:laurent} that 
$$
\tl f_n\in \bQ[q^{\pm1},\al_1^{\pm1},\dots,\al_g^{\pm1}][z_1^{\pm1},\dots,z_n^{\pm1}]^{S_n}\pser u,$$
hence by Proposition \ref{prop:inductive f} the coefficients $\tl f^{(i)}$ are $q$-twisted symmetric functions over $R$.
It follows from \mel[Theorem 5.2] that $\tl f_n|_{u=0}=1$,
hence $\tl f(0)=1$.

As before, define
$$\tl f_\la(u)=\tl f_n(z_1,\dots,z_n;u),\qquad z_i=q^{i-n}t^{\la_i},\ n\ge l(\la),$$
and consider the series of symmetric functions
\begin{align}
\label{eq:def om}
\dt[X;u,q,t]&=\sum_\la
\rbr{(-1)^{\n\la}q^{n(\la')}t^{n(\la)}}^p
\frac{\prod_{i=1}^gN_\la(\al_i\inv,q,t)}{N_\la(1,q,t)}
\wtl H_\la[X;q,t] \tl f_{\la'}(u),\\
\idt[X;u,q,t]&=(q-1)(1-t)\Log\dt[X;u,q,t].
\end{align}
Then \eqref{eq:th:cdt} and \eqref{eq:dt pos} translate to
\begin{align*}
\dt[T;0,q,t]&=\cdt(T,q,t),
&\idt[T;0,q,t]&=\cidt(T,q,t),\\
\dt[T;1,q,t]&=\pdt(q^{-p/2}T,q,t),
&\idt[T;1,q,t]&=(1-t)\pidt(q^{-p/2}T,q,t).
\end{align*}

In order to apply Theorem~\ref{th:tech} we need to show that
$$\dt[X;0,q,t]=\sum_\la
\rbr{(-1)^{\n\la}q^{n(\la')}t^{n(\la)}}^p
\frac{\prod_{i=1}^gN_\la(\al_i\inv,q,t)}{N_\la(1,q,t)}
\wtl H_\la[X;q,t]
$$
is admissible.
The series 
$$\sum_\la
\frac{\prod_{i=1}^gN_\la(\al_i\inv,q,t)}{N_\la(1,q,t)}
\wtl H_\la[X;q,t]$$
is admissible according to \melc.
The operator $\Na$ defined by 
$$\wtl H_\la\mto(-1)^{\n\la}q^{n(\la')}t^{n(\la)}\wtl H_\la$$
preserves admissibility by \melc[Cor.~6.3].
Therefore the series $\dt[X;0,q,t]$ is also admissible (one actually obtains from \melc that the coefficients of $\idt[X;0,q,t]$ are in
$\bZ[q,t,\al_1^{\pm1},\dots,\al_g^{\pm1}]$, hence the same is true for $\cidt(T,q,t)$).

We conclude from Theorem \ref{th:tech} that
\begin{equation}
\label{eq:difference}
\idt[T;u,q,t]-\idt[T;0,q,t]\in (1-t)R\pser{T,u}.
\end{equation}
By Lemma \ref{lm:laurent} we can consider $\dt[T;u,q,t]$ \eqref{eq:def om}
as a series
with polynomial coefficients in $u$
$$\dt[T;u,q,t]\in\bQ(q,t)[u]\lser{\al_1\inv,\dots,\al_g\inv}\pser T.$$
The same then applies to $\idt[T;u,q,t]$ and we can set $u=1$ in \eqref{eq:difference}.
We obtain
\begin{equation*}
(1-t)\pidt(q^{-p/2}T,q,t)-\cidt(T,q,t)
\in(1-t)R\pser T.
\end{equation*}
This implies that $(1-t)q^{-pr/2}\pidt_r(q,t)-\cidt_r(q,t)=(1-t)g$ for some $g\in R$.
Therefore $$q^{-pr/2}\sum_{d\ge0}\pidt_{r,d}t^d=\frac{\cidt_r(q,t)}{1-t}+g.$$
Comparing the coefficients of the monomials in $\al_1,\dots,\al_g$ and using the fact that $\pidt_{r,d+r}=\pidt_{r,d}$ for $d\gg0$,
we conclude that $q^{-pr/2}\pidt_{r,d}=\cidt_r(q,1)$ for $d\gg0$.
\end{proof}

\begin{notes}
We should have $\cidt_r[t=1]-(1-t)\pidt_r=(1-t)f(t)$ for some $f\in R$.
Therefore the above constant is actually $\cidt_r[t=1]$.
\end{notes}

\begin{remark}
\label{rem:canonical}
Let us also formulate the result in the case $L=\om_X$ (the canonical bundle) for completeness \mel.
In this case we have $\ell=2g-2$ and $p=\ell-(2g-2)=0$.
Define as before
\begin{equation}
\cdt(T,q,t)=
\sum_{\la}
\frac{\prod_{i=1}^gN_\la(\al_i\inv,q,t)}{N_\la(1,q,t)}
T^{\n\la}
\end{equation}
\begin{equation}
\cidt(T,q,t)
=\sum_{r\ge1}\cidt_r(q,t)T^r=(q-1)(1-t)\Log\cdt(T,q,t).
\end{equation}
Using results of \msc and the same proof as before, we obtain the formula for integral Donaldson-Thomas invariants
$\idt_{r,d}=q\cidt_r(q,1)$ (note the additional factor $q$).
These invariants are related to the invariants $A_{r,d}$ counting absolutely indecomposable vector bundles of rank $r$ and degree~$d$ over $X$: $\idt_{r,d}=qA_{r,d}$ \msc.
This implies $A_{r,d}=\cidt_r(q,1)$, as was proved by Mellit \mel.
\end{remark}

\subsection{Alternative formulation}
\label{sec:alt}
The following result was conjectured in \cite[Conj.~3]{mozgovoy_solutions}.

\def\dta{\mathbf H^\circ}
\begin{theorem}
Assume that $p=\ell-(2g-2)>0$.
Consider the series 
$$\cH(T,q,t)=
\sum_{\la}T^{\n\la}
\prod_{s\in\la}
(-t^{a(s)-l(s)}q^{a(s)})^p
t^{(1-g)(2l(s)+1)}Z_X(t^{h(s)}q^{a(s)}),$$
$$\dta(T,q,t)=\sum_{r\ge1}\dta_r(q,t)T^r=(1-t)(1-qt)\Log\cH(T,q,t).$$
Then $\dta_r(q,t)\in\bZ[q,t^{\pm1},\al_1^{\pm1},\dots,\al_g^{\pm1}]$
and $\idt_{r,d}=q^{pr/2}\dta_r(q,1)$.
\end{theorem}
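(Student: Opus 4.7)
The approach is to reduce the claim to Theorem~\ref{th:main proof}, which already gives $\idt_{r,d} = q^{pr/2}\cidt_r(q,1)$. It therefore suffices to show that $\dta_r(q,t)\in\bZ[q,t^{\pm1},\al_1^{\pm1},\dots,\al_g^{\pm1}]$ and that $\dta_r(q,1) = \cidt_r(q,1)$ for every $r\ge 1$. Both properties should follow from an explicit comparison between the two generating series $\cH(T,q,t)$ and $\cdt(T,q,t)$.

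The first step is to set up this comparison. Using the identities $\sum_s a(s) = n(\la')$, $\sum_s l(s) = n(\la)$, and $\sum_s(2l(s)+1) = \ang{\la,\la}$ (the last via \eqref{eq:form vs n}), one extracts from each $\la$-summand of $\cH$ the prefactor $((-1)^{|\la|}q^{n(\la')}t^{n(\la')-n(\la)})^p\cdot t^{(1-g)\ang{\la,\la}}$, leaving $\prod_s Z_X(t^{h(s)}q^{a(s)})$. The plan is to match this product against the ratio $\prod_i N_\la(\al_i^{-1},q,t)/N_\la(1,q,t)$ appearing in $\cdt$. Using $1-x = -x(1-x^{-1})$ to rewrite each factor of $N_\la$, together with the hook relation $h(s)=a(s)+l(s)+1$ and the functional equation $Z_X(1/(qy)) = q^{1-g}y^{2-2g}Z_X(y)$, I would reconcile the two forms up to explicit monomial corrections. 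The outcome should be a multiplicative relation $\cH(T,q,t) = \cdt(T,q,t)\cdot E(T,q,t)$ with $E$ an explicit rational series in $T$ whose plethystic logarithm has a controlled pole structure.

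Granted such an identity, integrality of $\dta_r(q,t)$ follows by combining the admissibility of $\cdt$ (from Theorem~\ref{th:main proof}, established via~\melc) with a direct analysis of $\Log E$: the extra factor $(1-qt)$ in the normalization of $\dta$ is precisely what cancels the $Z_X$-poles of $\Log E$ at $t=1/q$ that are absent from $\cdt$. For the comparison at $t=1$, one computes the residue at $t=1$. Since $(1-qt)|_{t=1}=-(q-1)$, a relation $\Res_{t=1}\Log\cH = -\Res_{t=1}\Log\cdt$ (which the explicit form of $E$ should produce) yields the desired $\dta_r(q,1) = \cidt_r(q,1)$. As a sanity check, one can verify this at leading order in $T$: at $T^1$, the product $\prod_i(1-\al_i)(1-\al_i^{-1}q) = \prod_i(1-\al_i^{-1})(q-\al_i)$ combined with $(1-q)=-(q-1)$ yields opposite residues for the $T^1$-coefficients of $\Log\cH$ and $\Log\cdt$ at $t=1$, exactly matching the predicted sign structure.

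The main obstacle will be the matching identity of Step~2. The $Z_X$-arguments $t^{h(s)}q^{a(s)}$ in $\cH$ couple the arm and leg lengths together through the hook $h=a+l+1$, whereas the $N_\la$ ratio in $\cdt$ keeps them separated as $t^{l+1}$ and $t^l$. Bridging this coupling requires careful use of the functional equation of $Z_X$ to convert $t^{h}q^a$-type arguments into the $q^{-a}t^{l+1}$- and $q^{-a-1}t^l$-type arguments that naturally appear inside $N_\la$, with all signs and monomial prefactors scrupulously tracked. Once the identity $\cH = \cdt\cdot E$ is established, the rest of the argument is essentially formal bookkeeping with the plethystic logarithm.
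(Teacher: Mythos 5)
Your reduction to Theorem~\ref{th:main proof} is the right idea, and your $T^1$ sanity check is a correct computation, but the central step --- a multiplicative identity $\cH(T,q,t)=\cdt(T,q,t)\cdot E(T,q,t)$ with $E$ an ``explicit rational series with controlled pole structure'' --- does not exist in the form you need, and the proposal stands or falls on it. Already for $\la=(1)$ the ratio of the corresponding summands is
$$\frac{t^{1-g}\,Z_X(t)}{\prod_{i}(1-\al_i\inv t)(q-\al_i)\big/\rbr{(1-t)(q-1)}}
=\frac{t^{1-g}(q-1)\prod_i(1-\al_i t)(1-\al_i\inv qt)}{(1-qt)\prod_i(1-\al_i\inv t)(q-\al_i)},$$
which depends nontrivially on $t$; so there is no term-by-term factorization, and the only $E$ available is the formal quotient of the two power series in $T$, whose coefficients are uncontrolled rational functions. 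Your plan then needs $\Res_{t=1}\Log\cH=-\Res_{t=1}\Log\cdt$ in every $T$-degree, which at that point is essentially equivalent to the statement being proved rather than a consequence of properties of $E$.

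The relation between the two series is not multiplicative at fixed $(q,t)$ but a change of variables: one checks directly, using $h=a+l+1$ and the product form of $Z_X$ (no functional equation needed), that
$$\cH(T,qt,t\inv)=\sum_\la T^{\n\la}\rbr{(-1)^{\n\la}q^{n(\la')}t^{n(\la)}}^p\frac{\prod_{i=1}^gN_\la(\al_i\inv,q,t)}{N_\la(1,q,t)}=\cdt(T,q,t),$$
the powers of $t$ coming from $t^{(1-g)(2l+1)}$ cancelling exactly against those produced by clearing $t^{-h}$ from the numerator and denominator of $Z_X(t^{-h}q^a)$. Under this substitution the normalization transforms as $t\,\dta(T,qt,t\inv)=(1-t)(q-1)\Log\cH(T,qt,t\inv)=\cidt(T,q,t)$, which gives both the integrality of $\dta_r$ and $\dta_r(q,1)=\cidt_r(q,1)$ at once; the sign flip of residues you observed at $T^1$ is precisely the effect of $t\mto t\inv$ on a residue at $t=1$, not evidence for a multiplicative correction. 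I would redo the argument along these lines: verify the substitution identity box by box, then track the prefactors.
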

\begin{notes}
We can not substitute $q\mto qt\inv$ and get $Z_X(t^{l+1}q^a)$.
The reason is that we should also substitute the Weil root $\al_i\inv q$ by $\al_i\inv qt\inv$.
\end{notes}
\begin{proof}
Using the substitution $t\mto t\inv$, we obtain
\begin{multline*}
\cH(T,q,t\inv)
=\sum_{\la}T^{\n\la}\prod_{s\in\la}(-t^{l-a}q^a)^p
t^{(g-1)(2l+1)}Z_X(t^{-h}q^a)\\
=\sum_{\la}T^{\n\la}
\prod_{s\in\la}(-t^{l-a}q^a)^p
\frac{\prod_{i=1}^g(t^{l+1}-\al_i t^{-a}q^a)(t^l-\al_i\inv t^{-a-1}q^{a+1})}
{(t^{l+1}-t^{-a}q^a)(t^l-t^{-a-1}q^{a+1})},
\end{multline*}
while
$$t\dta(T,q,t\inv)=(1-t)(t\inv q-1)\Log\cH(T,q,t\inv).$$
Using the substitution $q\mto qt$, we obtain
\begin{multline*}
\cH(T,qt,t\inv)=
\sum_{\la}T^{\n\la}
\prod_{s\in\la}
(-t^{l}q^a)^p
\frac{\prod_{i=1}^g(t^{l+1}-\al_i q^a)(t^l-\al_i\inv q^{a+1})}{(t^{l+1}-q^a)(t^l-q^{a+1})}
\\
=\sum_{\la}T^{\n\la}
\rbr{(-1)^{\n\la}q^{n(\la')}t^{n(\la)}}^p
\frac{\prod_{i=1}^g N_\la(\al_i\inv,q,t)}{N_\la(1,q,t)}.
\end{multline*}
Now the result follows from Theorem \ref{th:main proof}.
\end{proof}


\providecommand{\bysame}{\leavevmode\hbox to3em{\hrulefill}\thinspace}
\providecommand{\href}[2]{#2}

\end{document}